\newtheorem{theorem}{Theorem}
\newtheorem{lemma}{Lemma}
\newcommand{\be}{\begin{equation}}
\newcommand{\ee}{\end{equation}}
\newcommand{\bea}{\begin{eqnarray}}
\newcommand{\eea}{\end{eqnarray}}
\newcommand{\hs}{\hspace{.3in}}
\begin{document}

\title{\vspace{-1.4cm}A uniform FDR upper bound for a weighted FDR procedure under exchangeability}
\author{
Faith Zhang\footnote{Department of Mathematics and Statistics, Washington State University, Pullman, WA 99164, USA. $^{\ddagger}$Current address: Department of Mathematics and Statistics, University of Massachusetts, Amherst, MA 01003, USA; Email: {\tt yzhang@math.umass.edu}. $^{\dagger}$Author of correspondence. Email: {\tt xiongzhi.chen@wsu.edu}.}\ $^{\ddagger}$ \ and Xiongzhi Chen$^{\ast \dagger}$
}
\date{}
\maketitle

\begin{abstract}

For a weighted false discovery rate (FDR) procedure for multiple testing the means of equicorrelated normal random variables, we provide an analytic, non-asymptotic, uniform FDR upper bound for its FDR. Two additional and related results are also provided.

\medskip

\textit{Keywords}: Exchangeability; false discovery rate; one-way adaptive p-value weighting

\end{abstract}

\section{Introduction}
\label{secIntro}

Controlling the false discovery rate (FDR, \cite{Benjamini:1995}) has become a routine practice in multiple hypothesis testing. Recently, weighted, grouped FDR procedures (i.e., procedures that do not employ a Bayesian model, partition hypotheses into groups, are based on p-value weighting with weights constructed from data, and apply the ``BH" procedure of \cite{Benjamini:1995} to the weighted p-values, and aim at FDR control) have exemplified excellent performances due to their abilities to better adapt to the proportion of signals or incorporate potential structures among the hypotheses. Some leading such procedures have been proposed by, e.g., \cite{Storey:2004,Benjamini:2006,Blanchard:2009,Hu:2010,Guo:2020,Nandi:2018}. Among these procedures, only the ``GBH1'' procedure of \cite{Nandi:2018} has been proven to be conservative non-asymptotically under independence and the ``GS-BH" procedure of \cite{Guo:2020} under block positive dependence when hypotheses are partitioned into more than one group, whereas the rest have been proven to be conservative non-asymptotically under independence when hypotheses are partitioned into one group only. However, the ``GS-BH" procedure may not show an advantage over the BH procedure when the block size is large, and considerable numerical evidence (provided by, e.g., \cite{Benjamini:2006,Blanchard:2009,Nandi:2018}) shows that all the aforementioned weighted FDR procedures are non-asymptotically conservative under positive dependence up to a certain degree, including some cases of testing the means of equicorrelated normal random variables. Despite the support given by such numerical evidence, there does not seem to be a theoretical investigation into the observed non-asymptotic conservativeness on weighted, grouped FDR procedures under positive dependence.

Due to the special place of GBH1 among weighted, grouped FDR procedures and the fact that GBH1 reduces to Storey's procedure of \cite{Storey:2004} when there is only one group, in this note, we provide an analytic, non-asymptotic, uniform FDR upper bound for the GBH1 in the scenario of multiple testing means of equicorrelated normal random variables. The bound is not tight (mainly due to the relaxation provided by \autoref{lemma 2} to be introduced later) but quantifies the maximal FDR of the GBH1 correspondingly. As by-products, our \autoref{lemma 2} extends Lemma 3.2 of \cite{Blanchard:2008}, and \autoref{lemma 1} (to be introduced later) extends Lemma 1 of \cite{Nandi:2018}, both to the setting where p-values are not necessarily super-uniform. We remark that, for the multiple testing scenario considered here, the only existing, non-asymptotic FDR upper bounds we are aware of are provided by \cite{Reiner-Benaim:2007,Roquain:2011b}. However, these bounds are for the setting of two-sided tests, testing two hypotheses, or a simple alternative, and none of them are for weighted, grouped FDR procedures. On the other hand, FDR upper bounds for other multiple testing scenarios have been provided by, e.g., \cite{Sarkar:2002,Ferreira:2006, Finner:2007}. But these bounds are either asymptotic or are not applicable to weighted, grouped FDR procedures.
Further, our techniques can be easily modified to obtain FDR upper bounds for GBH1 and other weighted, grouped FDR procedures when they test the means of equicorrelated random variables that are members of a location-shift family. Finally, our FDR upper bound may help investigate the Tukey-Kramer conjecture that postulates that the FDR of testing the means of equi-correlated random variables may be an upper bound on the FDR of testing these means when these random variables have certain covariance structures, for which interested readers can find a brief discussion on this conjecture in \cite{Reiner-Benaim:2007}.

In the remaining part of this note, we formally state in \autoref{secModelResult} the model, multiple testing problem and our main result, and provide in \autoref{SecDependence} a detailed proof of this result.

\section{Model and main result}
\label{secModelResult}

We begin with the testing problem. Let $X_i,\  i\in \left\{0\right\}\cup \mathbb{N}_{m},$ be i.i.d.
standard normal, where $\mathbb{N}_{s}$ is defined to be the set $\left\{1,\ldots,s\right\}$ for each natural number $s$. For a constant $\rho\in(0,1)$, let $Y_i=\mu_i+\sqrt{1-\rho} X_i+\sqrt{\rho}X_0$ for $i\in \mathbb{N}_{m}$. Then $Y_i$'s are exchangeable and equicorrelated with correlation $\rho$. We simultaneously test $m$ hypotheses $H_{i}:\mu_i= 0$ versus $H_{i}^{\prime}:\mu_i>0$ for $i\in \mathbb{N}_{m}$. This scenario has been commonly used as a ``standard model'' to assess the conservativeness of an FDR procedure under dependence by, e.g., \cite{Benjamini:2006,Blanchard:2009,Finner:2007,Nandi:2018}. For each $H_{i}$, consider its associated p-value $p_i = 1- \Phi\left(Y_i\right)$, where $\Phi$ is the CDF of the standard normal distribution. The GBH1, to be applied to $\left\{p_i\right\}_{i=1}^m$, is stated as follows:

\begin{itemize}
\item \textbf{Group hypotheses}: let the $g$ non-empty sets
$\left\{  {G_{j}}\right\}_{j=1}^{g}$ be a partition of $\mathbb{N}_{m}$, and accordingly let $\left\{  H_{i}\right\}_{i=1}^{m}$ be partitioned into $\mathcal{H}_{j}=\left\{  H_{j_{k}}:k\in
G_{j}\right\}  $ for $j\in\mathbb{N}_{g}$.

\item \textbf{Construct data-adaptive weights}: fix a $\lambda\in\left(  0,1\right)  $, the tuning parameter, and for each $j$ and $G_{j}$, set
\begin{equation}
w_{j}=\frac{\left(  n_{j}-R_{j}\left(\lambda\right) +1\right)  g  }{m\left(  1-\lambda\right) },\nonumber
\end{equation}
where $R_{j}\left(\lambda\right)= \sum_{i\in G_{j}}\mathbf{1}{\left\{  p_{i}\leq\lambda
\right\}} $, $\mathbf{1}A$ is the indicator function of a set $A$, and $n_j = \vert G_j \vert$ is the cardinality of $G_j$.

\item \textbf{Weight p-values and reject hypotheses}: weight the p-values $p_{i}$, $i\in
G_{j}$ into $\tilde{p}_{i}=p_{i}w_{j}$, and apply the BH procedure to
$\left\{  \tilde{p}_{i}\right\}  _{i=1}^{m}$ at nominal FDR level $\alpha \in \left(0,1\right)$.

\end{itemize}

Here is our main result:

\begin{theorem} \label{theorem 1}
When $\lambda\in \left(0, 1/2\right]$ and $\rho\in \left(0, 0.34\right)$, the FDR of GBH1 is upper bounded by
\bea
B\left(\lambda,\rho,\alpha\right)&=&\alpha\left(1-\lambda\right)\left\{\frac{1}{2\sqrt{1-\rho}\ \Phi\left(\frac{1}{\sqrt{1-\rho}}\Phi^{-1}(1- \lambda)\right)}\ +\ \frac{\sqrt{2\pi}}{2}\sqrt{\frac{1-\rho}{1-2\rho}}\right.\nonumber\\
& &\hs\hs+\ \frac{\sqrt{2\pi}}{2}\left(1-\sqrt{1-\rho}\right)\sqrt{\frac{3+\sqrt{1-\rho}}{2-5\rho-\rho\sqrt{1-\rho}}}\nonumber\\
& &\hs\hs+\ \frac{\sqrt{2\pi}}{8}\left(1-\rho\right)\left(1+\sqrt{1-\rho}\right)\left(\frac{3+\sqrt{1-\rho}}{2-5\rho-\rho\sqrt{1-\rho}}\right)^{\frac{3}{2}}\nonumber\\
& &\hs\hs+\ \frac{\sqrt{\rho(1-\rho)}}{1-2\rho}\ +\  \frac{\sqrt{\rho}\left(1-\sqrt{1-\rho}\right)\left(3+\sqrt{1-\rho}\right)}{2-5\rho-\rho\sqrt{1-\rho}}\nonumber\\
& &\hs\hs+\ \left. \frac{1}{2}\sqrt{\rho}\left(1-\rho\right)\left(1+\sqrt{1-\rho}\right)\left(\frac{3+\sqrt{1-\rho}}{2-5\rho-\rho\sqrt{1-\rho}}\right)^2\right\}.\nonumber
\eea
\end{theorem}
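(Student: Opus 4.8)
The plan is to exploit the one–factor structure of the equicorrelated Gaussian model by conditioning on the common shock $X_0$. Given $X_0=x$, the statistics $Y_1,\dots,Y_m$ are independent with $Y_i\sim N(\mu_i+\sqrt{\rho}\,x,\,1-\rho)$, so the p-values $p_1,\dots,p_m$ are conditionally independent, and for a null coordinate ($\mu_i=0$) the conditional null CDF is
\[
F_x(t)=\mathbb{P}(p_i\le t\mid X_0=x)=1-\Phi\!\left(\frac{\Phi^{-1}(1-t)-\sqrt{\rho}\,x}{\sqrt{1-\rho}}\right),
\]
which need not be $\le t$, so the conditional nulls fail to be super-uniform even though $\mathbb{E}_{X_0}[F_{X_0}(t)]=t$. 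Since the weighted BH step is self-consistent with the linear shape function $r\mapsto\alpha r/m$, a null $i\in G_j$ can be rejected only if $p_i\le \alpha R\,(1-\lambda)\,R_j(\lambda)/\{(n_j-R_j(\lambda)+1)(R(\lambda)+g-1)\}$, with $R$, $R_j(\lambda)$, $R(\lambda)$ the usual counts. This turns $\mathrm{FDR}=\mathbb{E}[V/(R\vee1)]$ into something one can attack coordinatewise once $X_0$ is fixed.

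\noindent Conditionally on $X_0=x$, I would first invoke \autoref{lemma 2}, our extension of Lemma~3.2 of \cite{Blanchard:2008} to p-values that need not be super-uniform, to bound the conditional FDR by a sum over null coordinates of the conditional ``critical-value mass'' of $p_i$; and then \autoref{lemma 1}, our extension of Lemma~1 of \cite{Nandi:2018}, to handle the data-adaptive Storey-type weights $w_j$. The decisive structural feature is the ``$+1$'' in $n_j-R_j(\lambda)+1$: on the event $\{p_i\le\lambda\}$, which contains the relevant rejection event whenever the induced threshold does not exceed $\lambda$, it makes $n_j-R_j(\lambda)+1$ equal to its leave-$i$-out version, so the conditional leave-one-out argument of \cite{Storey:2004, Nandi:2018} still goes through and decouples $w_j$ from $p_i$. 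Because the conditional nulls are not super-uniform this no longer closes at $\alpha$; instead it yields, after bounding the unknown null proportion by $1$, a conditional bound of the shape $\mathrm{FDR}(x)\le \alpha(1-\lambda)\,\psi(x,\rho,\lambda)$, where $\psi$ collects (i) terms controlled by $F_x(\lambda)$ and the conditional null density at $\lambda$ --- the source of the summand $\{2\sqrt{1-\rho}\,\Phi(\Phi^{-1}(1-\lambda)/\sqrt{1-\rho})\}^{-1}$ --- and (ii) polynomial-in-$|x|$ and exponential-in-$x^2$ terms quantifying how badly the within-group Storey estimators over- or under-shoot when $x\neq0$.

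\noindent The last step is to integrate over $X_0\sim N(0,1)$, giving $\mathrm{FDR}\le\alpha(1-\lambda)\,\mathbb{E}_{X_0}[\psi(X_0,\rho,\lambda)]$, and to evaluate or upper-bound each resulting integral in closed form. Gaussian convolution identities reduce the $F_x$- and density-type pieces to the first bracketed term, while Gaussian moment integrals of the type $\mathbb{E}[e^{cX_0^2}]$, $\mathbb{E}[|X_0|e^{cX_0^2}]$ and $\mathbb{E}[X_0^2e^{cX_0^2}]$ --- finite precisely because the relevant constants $c$ stay below $1/2$ when $\rho<0.34$ --- generate the remaining summands, with their $\sqrt{2\pi}$, $\sqrt{(1-\rho)/(1-2\rho)}$ and $\big((3+\sqrt{1-\rho})/(2-5\rho-\rho\sqrt{1-\rho})\big)^{k/2}$ factors; the denominators $1-2\rho$ and $2-5\rho-\rho\sqrt{1-\rho}$ must be positive, which is exactly what $\rho\in(0,0.34)$ guarantees, and $\lambda\in(0,1/2]$ is used so that $\Phi^{-1}(1-\lambda)\ge0$, keeping the monotonicity and convexity bounds of the previous step valid. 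Assembling the pieces yields $B(\lambda,\rho,\alpha)$.

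\noindent The step I expect to be the main obstacle is the second one: propagating the loss of super-uniformity through the adaptive weights. Under independence and super-uniformity the Storey/Nandi telescoping is exact and costs nothing, but here one must simultaneously (a) run that argument conditionally on $X_0$, (b) control the joint behaviour of $R_j(\lambda)$, $R(\lambda)$ and the rejection count $R$ when the conditional nulls are anti-conservative, and (c) keep every bound explicit enough in $x$ to remain integrable against the Gaussian density. It is the tension among these three requirements --- not any single inequality --- that forces the loose, multi-term form of $B(\lambda,\rho,\alpha)$ and makes it an honest rather than a tight bound.
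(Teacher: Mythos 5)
Your proposal follows essentially the same route as the paper's proof: condition on $X_0=x_0$ to restore independence, pass to the leave-one-out weights (your ``$+1$'' decoupling observation is exactly the identity $w_j=w_j^{(-k)}$ on $\{p_{j_k}\le\lambda\}$ that the paper uses), apply \autoref{lemma 2} to get the $M(\rho,x_0)$ factor and \autoref{lemma 1} to get the $1/P(\lambda,x_0)$ factor, and then integrate the resulting Gaussian moment integrals over $x_0$, with $\rho<0.34$ guaranteeing their finiteness and $\lambda\le 1/2$ guaranteeing $\Phi^{-1}(1-\lambda)\ge0$ in the lower bound on $P(\lambda,x_0)$. Yours is a strategy sketch rather than a full derivation, but every structural choice matches the paper's execution, and the anticipated closed forms of the integrals are consistent with the terms of $B(\lambda,\rho,\alpha)$.
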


In the theorem we restrict  $\lambda\in \left(0, 0.5\right]$  mainly because researchers often choose $\lambda=\alpha$ or $\lambda=0.5$ in practice (see, e.g., \cite{Blanchard:2009} and \cite{Nandi:2018}). Also, the requirement for $\rho\in(0,0.34)$ is to ensure some integrals to be finite in the proof  of \autoref{theorem 1}, and the interval $(0, 0.34)$ is obtained by solving $\left(2-a^2\right)/\left(a^2-1\right)>0$ and $\left(5a+1-3a^3-a^2\right)/\left(\left(a^2-1\right)\left(3a+1\right)\right)>0$ resulting from the calculations for the integrals in (\ref{integrals}) (in \autoref{SecDependence}) when $a>1$ . 
The upper bound $B\left(\lambda,\rho,\alpha\right)$ is uniform in the number of groups of hypotheses, the number of hypotheses in each group, the proportions of true null hypotheses in each group and across all hypotheses, and the means of normal random variables under the alternative hypothesis.
%
The ratio $B\left(\lambda,\rho,\alpha\right)/\alpha$ is always less than $10$ when $\rho \in (0, 0.15)$, and is less than $20$ when $\rho \in (0, 0.22)$, making the upper bound $B\left(\lambda,\rho,\alpha\right)$ useful for a good range of $\rho$ when $\alpha=0.05$.
On the other hand, $\inf_{\rho\in(0, 0.34), \lambda \in (0,1/2]} B\left(\lambda,\rho,\alpha\right)=\frac{\alpha}{2}(1+\frac{\sqrt{2\pi}}{2}+\sqrt{\pi})\approx 2.0128\alpha \le 2.02 \alpha$ is achieved when $\lambda=1/2$ and $\rho=0$.
However, the case of $\rho=0$ corresponds to independence among the normal random variables and hence among the p-values, for which the FDR of GBH1 is upper bounded by $\alpha$. So, $B\left(\lambda,\rho,\alpha\right)$ is not tight, mainly because we have used the suprema of several quantities; see \autoref{result 2} and \autoref{lemma 2} and their proofs.
Nonetheless, it may be the best possible such uniform upper bound under the settings here.


We conduct a small simulation to assess if our FDR upper bound holds. Specifically, we create $g=5$ groups of $100$ hypotheses each, set $\mu_i=0$ when $H_i$ is a true null, independently generate $\mu_i$ from the uniform distribution on the interval $\left[0.01,3\right]$ when $H_i$ are false nulls, and compare GBH1 and the ``BH" procedure of \cite{Benjamini:1995} that does not employ the tuning parameter $\lambda$. The choice of $\mu_i$ for false nulls aims to ensure that both GBH1 and BH have low to moderate powers so that each can display reasonable FDR behaviors. The correlation $\rho$ ranges in $\{0,0.15,0.25,0.34,0.4,0.55,0.7,0.85,1\}$, i.e., from the setting of independence to that supported by our \autoref{theorem 1} and beyond, and the proportion $\pi_0$ of true null hypotheses (among the $m$ hypotheses) ranges from $0$ to $0.9$ by step size $0.1$. Both procedures are applied at the nominal FDR level $\alpha=0.05$, and the FDRs and powers of GBH1 and the BH procedures are estimated by repeating independently $1000$ times the experiment with the settings described above. Note that the BH procedure is conservative for multiple testing with one-sided p-values of equicorrelated normal random variables with variance $1$, as proved by \cite{Benjamini:2000}. The simulation results are presented by \autoref{figure 1b}. We see that the FDRs of GBH1 are upper bounded by $2 \alpha$ for $\rho \in \{0,0.15,0.25,0.34\}$ and are upper bounded by $0.45=9\alpha$ for $\rho \in \{0.4,0.55,0.7,0.85,1\}$, the range not supported by our \autoref{theorem 1}, meaning that our FDR upper bound holds but is loose. This is reasonable since our FDR upper bound is uniform in $\mu_i$ and $\pi_0$. However, we confirm again that GBH1 with $\lambda=\alpha$ has lower FDR than it does with $\lambda=0.5$ but at the cost of a bit sacrifice in power, as already checked by \cite{Nandi:2018} but only for $\rho=0.3$ and $0<\lambda\le \alpha$. Such a finding on the choice of $\lambda$ has also been reported by \cite{Benjamini:2006,Blanchard:2009} for their adaptive FDR procedures under positive dependence.  Further, GBH1 is more powerful than the BH procedure when the proportion $\pi_0$ of true null hypotheses is not very close to $1$, and when $\pi_0$ is close to $1$ and $\rho$ is relatively large, both procedures have relatively large standard deviations for their false discovery proportions. One interesting finding from the top panel in \autoref{figure 1b} is that for a given set of means $\mu_i$ when the normal random variables have perfect, positive correlation $\rho=1$, the FDRs of GBH1 and BH are not monotone increasing in $\pi_0$. This is similar to the observation reported by \cite{Reiner-Benaim:2007} on the BH procedure when it is applied to p-values of two-sided tests based on normal random variables.

\begin{figure}[t]
\centering
\includegraphics[height=0.56\textheight,width=0.85\textwidth]{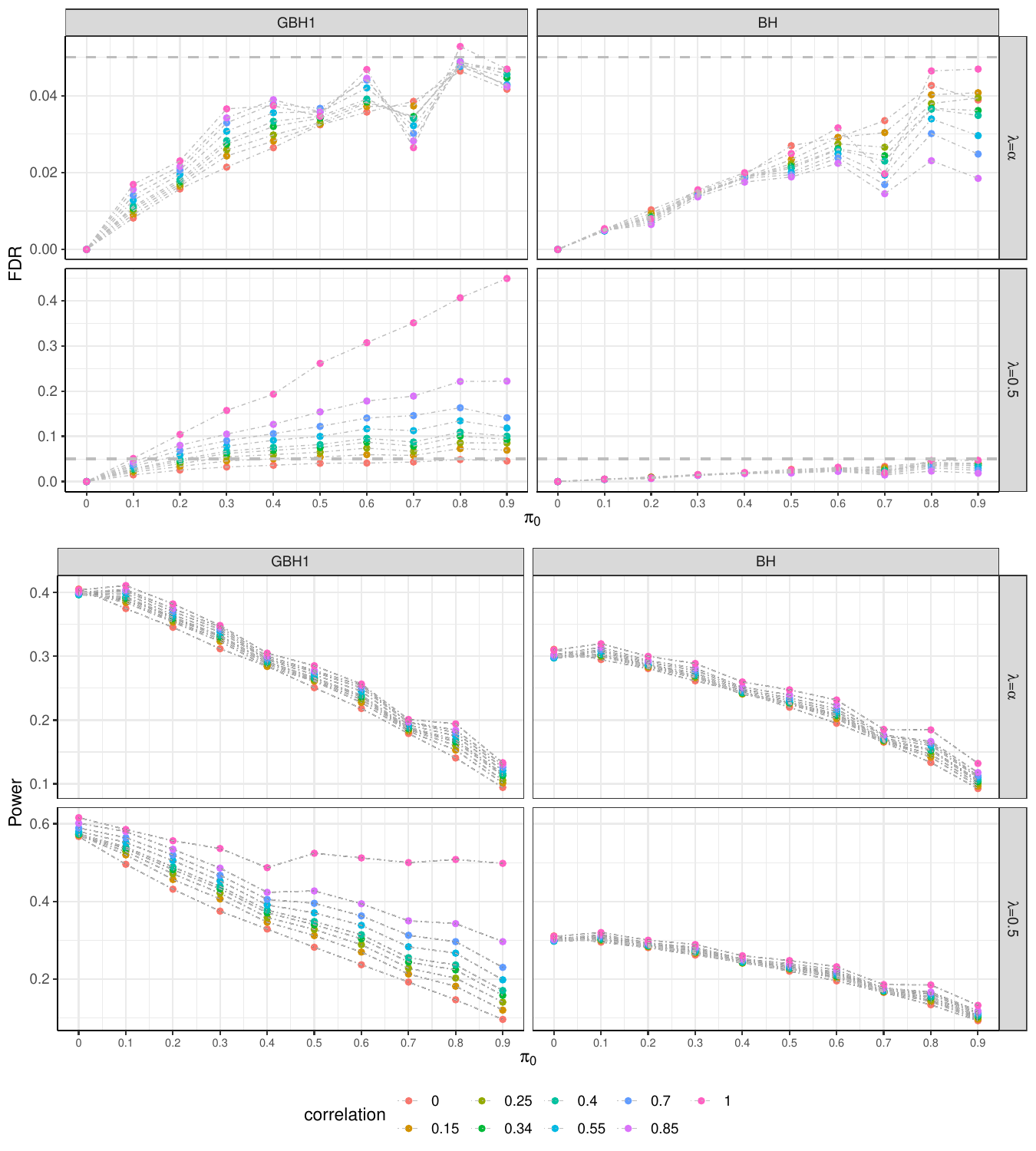}
\caption{FDRs and powers of GBH1 and the BH procedure ``BH" at nominal FDR level $\alpha=0.05$ for correlation $\rho \in \{0,0.15,0.25,0.34,0.4,0.55,0.7,0.85,1\}$ and proportion of true null hypotheses $\pi_0 \in \{0,0.1,\ldots,0.9\}$. Note that GBH1 employs tuning parameter $\lambda \in \{\alpha,0.5\}$ but BH does not.}
\label{figure 1b}
\end{figure}

\section{Proof of  \autoref{theorem 1}}
\label{SecDependence}

We provide a streamlined proof of \autoref{theorem 1} below and relegate auxiliary results to \autoref{section probability} and \autoref{section FDR}.
Let $V$ and $R$  be respectively numbers of false rejections and total rejections of GBH1, we first consider the conditional expectation $E\left(\left.V/R\ \right | X_0=x_0\right)$. Since $V/R=0$ is set when $R=0$, we can assume $R>0$ throughout the article.  Let $ I_0$ be the index set of true null hypotheses among the $m$ hypotheses, $G_{j0}=G_j\cap I_0$ the index set of true null hypotheses for group $j$, and $n_{j0}=\left|G_{j0}\right|$ the cardinality of $G_{j0}$.  Further let $\mathbf{p}= (p_1,...,p_m)$ be the vector of the $m$ p-values, and $\mathbf{p}_{-i}$  the vector obtained by excluding $p_i$ from $\mathbf{p}$.
Then
\bea E\left(\left.V/R\ \right | X_0=x_0\right)
&=&\sum\limits_{j=1}^g\sum\limits_{k\in G_{j0}}E\left(\left. \mathbf{1}{\left\{p_{j_k} \leq R \frac{\alpha}{mw_j}\right\}}/R\ \right | X_0=x_0\right)\nonumber\\
&\leq&\sum\limits_{j=1}^g\sum\limits_{k\in G_{j0}}E\left(\left.\mathbf{1}{\left\{p_{j_k} \leq R \frac{\alpha}{mw_j^{(-k)}}\right\}}/R\ \right | X_0=x_0\right)  \nonumber\\ 
&=&\sum\limits_{j=1}^g\sum\limits_{k\in G_{j0}}E\left[E\left(\left.\mathbf{1}{\left\{p_{j_k} \leq  R \frac{\alpha}{mw_j^{(-k)}}\right\}}/R \ \right| \ X_0=x_0,\  \mathbf{p}_{-j_k}\right)\right], \label{p}
\eea
where for each $j \in \mathbb{N}_g$ and $k \in G_j$,
$w_j^{(-k)}=g\left(n_j-R_{j}^{(-k)}\left(\lambda\right)\right) \left[m\left(1-\lambda\right) \right]^{-1}$ 
with $R_j^{(-k)}\left(\lambda\right)=\sum\limits_{i\in G_j\setminus \{k\}}\mathbf{1}{ \{p_i\leq \lambda\}}$, and the inequality is due to the fact that $w_j$ is non-decreasing in $p_{j_k}$ for all $j \in \mathbb{N}_g$ , $k \in G_j$ and $w_j\geq w_j^{(-k)}$.

Define $R_j\left(\lambda,x_0\right)=\sum\limits_{i\in G_j}\mathbf{1}{ \left\{\left. p_i\leq \lambda\  \right | X_0=x_0\right\}}$
and $R_j^{(-k)}\left(\lambda,x_0\right)=\sum\limits_{i\in G_j\setminus \left\{k\right\}}\mathbf{1}{ \left\{\left. p_i\leq \lambda\  \right | X_0=x_0\right\}}$
for each $j \in \mathbb{N}_g$ and $k \in G_j$. Then the inequality (\ref{p}) implies
\bea &&E\left(\left.V/R\ \right | X_0=x_0\right)\leq \sum\limits_{j=1}^g\sum\limits_{k\in G_{j0}}E\left[\frac{\alpha M(\rho,x_0)}{mw_j^{(-k)}}\right]\label{by lemma 2}\\&=&\frac{\alpha M(\rho,x_0)}{m}\sum\limits_{j=1}^g\sum\limits_{k\in G_{j0}}E\left[\frac{m\left(1-\lambda\right)}{\left(n_j-R_{j}^{(-k)}\left(\lambda,x_0\right)\right)g}\right],\label{p2x}
\eea
where $M(\rho,x_0)$ is defined by \autoref{result 2} and the inequality (\ref{by lemma 2}) holds by \autoref{lemma 2}.
Set
$h\left(R_j\left(\lambda,x_0\right)\right)=1/g$ and
$h\left(R_j^{(-k)}\left(\lambda,x_0\right)\right)=1/g$.
Applying \autoref{lemma 1} with $h$ in place of $\tilde{h}$ to the expectation in (\ref{p2x}) gives
\bea E\left(\left.V/R\ \right | X_0=x_0\right)&\leq& \alpha\left(1-\lambda\right)M(\rho,x_0)\sum\limits_{j=1}^g\sum\limits_{k\in G_{j0}}E\left[\frac{h\left(R_j^{(-k)}\left(\lambda,x_0\right)\right) }{n_j-R_{j}^{(-k)}\left(\lambda,x_0\right)}\right]\nonumber\\&
\leq&\alpha\left(1-\lambda\right)M(\rho,x_0)\sum\limits_{j=1}^g\frac{1}{P\left(\lambda,x_0\right)}E\left[h\left(R_j\left(\lambda,x_0\right)\right)\right] \nonumber\\&
=&\frac{\alpha\left(1-\lambda\right)M(\rho,x_0)}{P\left(\lambda,x_0\right)}\label{pot1b},
\eea
where the last equality follows from $\sum_{j=1}^g h\left(R_j\left(\lambda,x_0\right)\right)= 1$. Let $\hat{\alpha}$ be the FDR of the GBH1 procedure. Then with (\ref{pot1b}) we obtain
\bea&& \hat{\alpha}=E\left[E\left(\left.V/R\ \right | X_0\right)\right]=\int_{-\infty}^\infty E\left(\left.V/R\ \right | X_0=x_0\right)\phi(x_0)dx_0\nonumber\\
&\leq&\alpha\left(1-\lambda\right)\int_{-\infty}^\infty \frac{M(\rho,x_0)}{P\left(\lambda,x_0\right)}\phi(x_0)dx_0=\alpha\left(1-\lambda\right)\int_{-\infty}^\infty \frac{M(\rho,x_0)}{P\left(\lambda,x_0\right)}\phi\left(\frac{-b}{\sqrt{a^2-1}}\right)\left(\frac{1}{\sqrt{a^2-1}}\right)d b \nonumber\\
&\leq&\frac{\alpha\left(1-\lambda\right)}{\sqrt{a^2-1}}\int_{-\infty}^0 \left[1+\frac{4(a-1)^2+b^2}{4(a-1)} \exp{\left(\frac{b^2}{8a^2-2(a+1)^2}\right)}\right]\left(\frac{1-b}{\phi(-b)}\right)\phi\left(\frac{-b}{\sqrt{a^2-1}}\right)d b\nonumber\\
& &+\  \frac{\alpha\left(1-\lambda\right)}{\sqrt{a^2-1}}\int_{0}^\infty \frac{a}{\Phi\left(a\Phi^{-1}\left(1- \lambda\right)\right)}\phi\left(\frac{-b}{\sqrt{a^2-1}}\right)d b \label{by result 2 and 3}\\
&=& \frac{\alpha\left(1-\lambda\right)}{\sqrt{a^2-1}}\left(I_1+I_2+I_3+I_4+I_5+I_6\right)+\frac{\alpha\left(1-\lambda\right)a}{\sqrt{a^2-1}\ \Phi\left(a\Phi^{-1}\left(1- \lambda\right)\right)}I_7,\label{integrals}\eea
where $\phi$ denotes the density of the standard normal distribution, $x_0=\frac{-b}{\sqrt{a^2-1}}$ 
with $a=\frac{1}{\sqrt{1-\rho}}>1$, and $b=b(x_0)=-\sqrt{\frac{\rho}{1-\rho}}x_0$. Specifically, (\ref{by result 2 and 3}) is due to  \autoref{result 2}, (\ref{result 3a}) and  (\ref{result 3b}), and the $I_j$'s in (\ref{integrals}) are given in the supplementary material.
Therefore, $\hat{\alpha} \le B\left(\lambda,\rho,\alpha\right)$,
where $B\left(\lambda,\rho,\alpha\right)$ is given in the statement of \autoref{theorem 1}.

\subsection{Bounding the probability of a conditional false rejection} \label{section probability}

For $t\in\left [0,1\right]$ and $i\in I_0$, we have the ``probability of a conditional false rejection'' as
\bea \Pr\left(\left.p_i\leq t\ \right | X_0=x_0\right)=\Pr\left[1-\Phi\left(\sqrt{1-\rho} X_i+\sqrt{\rho}x_0\right)\leq t\right]\nonumber=1-\Phi\left[\frac{\Phi^{-1}(1-t)}{\sqrt{1-\rho}}-\sqrt{\frac{\rho}{1-\rho}}x_0\right], \nonumber\eea
which induces the ratio \be g\left(t\right)=t^{-1}{\Pr\left(\left.p_i\leq t\ \right | X_0=x_0\right)}=t^{-1} \left\{1-\Phi\left[\frac{\Phi^{-1}(1-t)}{\sqrt{1-\rho}}-\sqrt{\frac{\rho}{1-\rho}}x_0\right]\right\} .\nonumber\ee 
Note that $g\left(0\right)=0$ is set since $\lim_{t\to 0} g\left(t\right)=0$ holds, and that $g(1)=1$. The key result in this subsection is an upper bound on $g$ (or $f$ to be introduced later), given by \autoref{result 2} below.

First, let us verify that $g$ is upper bounded on $\left [0,1\right]$. Setting $a=\frac{1}{\sqrt{1-\rho}}>1$ and $b=b(x_0)=-\sqrt{\frac{\rho}{1-\rho}}x_0\in \mathbbm{R}$ gives an equivalent representation of $g$ as $ g\left(t\right)=t^{-1}\left\{1-\Phi\left[a\Phi^{-1}\left(1-t\right)+b\right]\right\}$. Clearly, $g\left(t\right)<1$ when $t\in \left[0,\ 1-\Phi\left(\frac{-b}{a-1}\right)\right)$, and $g\left(t\right)\geq 1$ when $t\in \left[1-\Phi\left(\frac{-b}{a-1}\right),\ 1\right]$. However, $g\left(t\right)$ is continuous for $t\in\left [0,1\right]$. So, $g$ attains its maximum at some $\tilde{t}
\in \left[1-\Phi\left(\frac{-b}{a-1}\right),\ 1\right]$ 
 and is thus bounded on $\left [0,1\right]$.

Secondly, let us find an upper bound for $g$. Setting $x=\Phi^{-1}(1-t)$ with $\Phi^{-1}(1)=\infty$ and $\Phi^{-1}(0)=-\infty$  gives another equivalent representation of $g$ as $ f\left(x\right)=\left[1-\Phi\left(ax+b\right)\right]/\left[1-\Phi\left(x\right)\right]$.
So, it suffices to upper bound $f$ on $\mathbbm{R}$. Clearly, $f\left(x\right)< 1$ when $x>\frac{-b}{a-1}$, $f\left(x\right)> 1$ when $x< \frac{-b}{a-1}$, $f\left(\frac{-b}{a-1}\right)= 1$, and $\lim_{x \to  -\infty} f\left(x\right)=1$. So $\operatorname*{argmax}_{x \in \mathbbm{R}} f\left(x\right) \subseteq \left(-\infty, \frac{-b}{a-1}\right)$, and it suffices to upper bound $f$ on $\left(-\infty, \frac{-b}{a-1}\right)$. To this end, we need the following:

\begin{lemma}\label{result 1}
For $i\in I_0$, \bea\Phi\left(ax+b\right)=\Phi\left(x\right)+ \frac{1}{\sqrt{2\pi}}\left[(a-1)x+b\right]\ \exp{\left[-\frac{1}{2}\left(\frac{2ax+b}{a+1}\right)^2\right]}. \label{MVT}\eea
\end{lemma}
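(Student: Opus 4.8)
The plan is to establish (\ref{MVT}) as the Mean Value Theorem identity its equation label advertises. First I would rewrite the left-hand side as an integral of the standard normal density: since $\Phi'=\phi$,
\[
\Phi(ax+b)-\Phi(x)=\int_{x}^{ax+b}\phi(t)\,dt ,
\]
read as $-\int_{ax+b}^{x}\phi(t)\,dt$ when $ax+b<x$. The gap between the two limits is $(ax+b)-x=(a-1)x+b$, which is precisely the linear factor in (\ref{MVT}). When $(a-1)x+b=0$ we have $ax+b=x$ and $2ax+b=(a+1)x$, so both sides of (\ref{MVT}) collapse to $\Phi(x)$ and there is nothing to prove; hence I may assume $(a-1)x+b\neq0$.

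Granting that, the Mean Value Theorem for integrals — equivalently the Mean Value Theorem applied to $\Phi$ on the interval with endpoints $x$ and $ax+b$, using $\Phi'=\phi$ — yields a point $\theta$ strictly between $x$ and $ax+b$ with
\[
\int_{x}^{ax+b}\phi(t)\,dt=\phi(\theta)\bigl[(a-1)x+b\bigr]=\frac{1}{\sqrt{2\pi}}\bigl[(a-1)x+b\bigr]\exp\!\Bigl(-\tfrac{\theta^{2}}{2}\Bigr).
\]
What remains is to show that this mean-value point can be taken to be $m:=\tfrac{2ax+b}{a+1}$. The structural fact that makes this natural is that $m$ is the convex combination $\tfrac{a}{a+1}\,x+\tfrac{1}{a+1}\,(ax+b)$ of the two endpoints, so it lies strictly between them; indeed, with $c:=\tfrac{(a-1)x+b}{a+1}$ one has $x=m-c$ and $ax+b=m+ac$, which recentres the integral at $m$ and reduces (\ref{MVT}) to the equality $\int_{-c}^{ac}e^{-(m+s)^{2}/2}\,ds=(a+1)\,c\,e^{-m^{2}/2}$. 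To pin down that $\theta=m$, rather than merely some point of the interval, I would look for a Gaussian-specific argument — for instance, differentiating (\ref{MVT}) in $b$ with $x,a$ held fixed, checking that the two sides obey the same first-order relation and agree at $b=-(a-1)x$, then invoking uniqueness; or exhibiting an affine symmetry of $e^{-t^{2}/2}$ about $m$ over the relevant range.

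I expect this last step to be the one real obstacle. The Mean Value Theorem alone only guarantees \emph{some} point in the interval, so identifying it with the explicit value $\tfrac{2ax+b}{a+1}$ must exploit the log-quadratic form of $\phi$ (through a differential or symmetry argument) rather than any soft interval reasoning; everything preceding it is the routine rewriting displayed above.
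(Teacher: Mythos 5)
You have located the crux exactly right: the Mean Value Theorem only supplies \emph{some} intermediate point $\theta$ strictly between $x$ and $ax+b$, and the whole content of (\ref{MVT}) is the claim that this point may be taken to be $m=\tfrac{2ax+b}{a+1}$. But your proposal stops there --- you list candidate strategies (a differential identity in $b$, a symmetry of $e^{-t^{2}/2}$ about $m$) without carrying any of them out --- so as written it does not prove the lemma. More importantly, the step cannot be completed, because the identity is false as an exact pointwise statement. Take $a=1.2$ (i.e.\ $\rho=1-a^{-2}\approx0.306$, inside the paper's range), $b=0$ (i.e.\ $x_{0}=0$) and $x=1$: the left side is $\Phi(1.2)\approx0.88493$, while the right side is $\Phi(1)+\tfrac{0.2}{\sqrt{2\pi}}\exp\bigl(-\tfrac12(12/11)^{2}\bigr)\approx0.88535$. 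Your own first suggested strategy detects this: both sides agree to first order in $u:=(a-1)x+b$ at $u=0$, but the second derivatives in $b$ at $b=-(a-1)x$ are $-x\phi(x)$ for the left side versus $-\tfrac{2}{a+1}\,x\phi(x)$ for the right side, which differ whenever $a>1$ and $x\neq0$. (To second order the correct mean-value point is the midpoint $x+u/2$, i.e.\ $\theta\approx1/2$ rather than $1/(a+1)$; the two are close for $a$ near $1$, which is why the numerical discrepancy is small.)

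For comparison, the paper pins down $\theta$ by a device different from either of the ones you propose: it integrates the MVT representation of $\Pr(p_{i}\le t\mid X_{0}=x_{0})$ over $x_{0}$ (equivalently over $b$), invokes the marginal uniformity $E[\Pr(p_{i}\le t\mid X_{0})]=t$ to force the integrated remainder term to vanish, evaluates the resulting Gaussian integral in closed form, and solves for $\theta=1/(a+1)$. That computation, however, treats $\theta$ as a single constant that can be pulled through the integral over $b$, whereas the mean-value point genuinely depends on $b$ (and on $x$); it therefore only identifies what $\theta$ would have to be \emph{if} it were constant in $b$, and does not establish the pointwise identity that \autoref{result 2} subsequently uses. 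In short, your instinct that this is ``the one real obstacle'' is correct; your proposal leaves it open, and neither your suggested completions nor the paper's averaging argument can close it, because the claimed equality does not hold pointwise --- at best one can hope for an inequality or an averaged version of (\ref{MVT}).
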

The proof of \autoref{result 1} is given in the supplementary material. With \autoref{result 1}, we can obtain an upper bound for $f$ (or $g$) as follows:
\begin{lemma}\label{result 2}
For $\rho\in(0,\ 0.34)$ we have $f\left(x\right)\leq M(\rho,x_0)$, where 
\bea M(\rho,x_0)
&=& \begin{cases}       a & b\geq 0 \\        1+\frac{4(a-1)^2+b^2}{4(a-1)} \exp{\left(\frac{b^2}{8a^2-2(a+1)^2}\right)} & b<0    \end{cases}\nonumber \\
&=&\begin{cases}  \frac{1}{\sqrt{1-\rho}} &x_0\leq 0\\ 1+\frac{4\left(1-\sqrt{1-\rho}\right)^2+\rho x_0^2}{4(\sqrt{1-\rho}-1+\rho)} \exp{\left(\frac{\rho x_0^2}{4\left(1-\sqrt{1-\rho}\right)+2\rho}\right)} & x_0>0\ .\end{cases}\nonumber\eea
\end{lemma}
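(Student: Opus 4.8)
The plan is to work from the representation $f(x)=[1-\Phi(ax+b)]/[1-\Phi(x)]$, with $a=1/\sqrt{1-\rho}>1$ and $b=-\sqrt{\rho/(1-\rho)}\,x_0$, recalling from the discussion preceding the statement that it suffices to bound $f$ on $(-\infty,\,-b/(a-1))$, where $f>1$, and to split the argument according to the sign of $b$ (equivalently of $x_0$), which is exactly the split in $M(\rho,x_0)$.

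For $b\geq 0$ (i.e. $x_0\leq 0$) the relevant interval lies in $(-\infty,0]$, so $|x|=-x$ there; since $ax+b\geq ax$ and $\Phi$ is increasing, $f(x)\leq[1-\Phi(ax)]/[1-\Phi(x)]=\Phi(a|x|)/\Phi(|x|)$. I would then write $\Phi(a|x|)-\Phi(|x|)=\int_{|x|}^{a|x|}\phi(t)\,dt\leq (a-1)|x|\,\phi(|x|)$ (because $\phi$ decreases on $[0,\infty)$) and note $\Phi(|x|)=\tfrac12+\int_0^{|x|}\phi(t)\,dt\geq\tfrac12+|x|\phi(|x|)>|x|\phi(|x|)$, whence $f(x)\leq 1+(a-1)\,|x|\phi(|x|)/\Phi(|x|)<1+(a-1)=a=M(\rho,x_0)$. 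This disposes of the first branch with room to spare.

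For $b<0$ (i.e. $x_0>0$) I would invoke \autoref{result 1}: since $(a-1)x+b<0$ on the interval, it rearranges to $f(x)=1+\big[-(a-1)x-b\big]\big[\sqrt{2\pi}\,(1-\Phi(x))\big]^{-1}\exp\!\big[-\tfrac12\big(\tfrac{2ax+b}{a+1}\big)^2\big]$ with positive leading factor. Bounding the denominator below by the standard Gaussian tail inequality $1-\Phi(x)\geq 2\phi(x)/(x+\sqrt{x^2+4})$, valid for all real $x$, gives $f(x)\leq 1+\tfrac12\big[-(a-1)x-b\big]\big(x+\sqrt{x^2+4}\big)\exp\!\big[\tfrac{x^2}{2}-\tfrac12\big(\tfrac{2ax+b}{a+1}\big)^2\big]$, and I would estimate the two positive factors separately. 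Completing the square in the exponent, using $(a+1)^2-4a^2=-(3a+1)(a-1)<0$, yields $\tfrac{x^2}{2}-\tfrac12\big(\tfrac{2ax+b}{a+1}\big)^2=\tfrac{b^2}{8a^2-2(a+1)^2}-\tfrac{(3a+1)(a-1)}{2(a+1)^2}\big(x+\tfrac{2ab}{(3a+1)(a-1)}\big)^2\leq\tfrac{b^2}{8a^2-2(a+1)^2}$. The substitution $s=x+\sqrt{x^2+4}\in(0,\infty)$ (so $x=s/2-2/s$) turns $\big[-(a-1)x-b\big]\big(x+\sqrt{x^2+4}\big)$ into the concave quadratic $-\tfrac{a-1}{2}s^2-bs+2(a-1)$ in $s$, which is therefore at most its global maximum $\tfrac{4(a-1)^2+b^2}{2(a-1)}$, attained at $s=-b/(a-1)>0$. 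Multiplying the two bounds gives $f(x)\leq 1+\tfrac{4(a-1)^2+b^2}{4(a-1)}\exp\!\big(\tfrac{b^2}{8a^2-2(a+1)^2}\big)=M(\rho,x_0)$; substituting $a=1/\sqrt{1-\rho}$ and $b^2=\tfrac{\rho}{1-\rho}x_0^2$ produces the displayed closed form, and the restriction $\rho<0.34$ plays no role in this lemma.

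The hard part will be the $b<0$ case, and specifically the two one-variable reductions above: recognizing that completing the square in the exponent collapses to exactly the clean constant $b^2/(8a^2-2(a+1)^2)$, and spotting the substitution $s=x+\sqrt{x^2+4}$ that linearizes the otherwise unwieldy product $\big[-(a-1)x-b\big]\big(x+\sqrt{x^2+4}\big)$ into a concave quadratic with an explicit maximum. Everything else — the rearrangement via \autoref{result 1}, the tail bound, and the final change of variables from $(a,b)$ to $(\rho,x_0)$ — is routine bookkeeping. (If one prefers not to invoke the tail bound for negative $x$, one may instead use $1-\Phi(x)\geq\tfrac12$ on $x\leq 0$ together with a Mills-ratio bound on $x\geq 0$, at the price of a considerably messier optimization.)
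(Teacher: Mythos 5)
Your proof is correct, and for the main case $b<0$ it is essentially the paper's argument: both of you use \autoref{result 1} together with the lower bound $1-\Phi(x)\ge 2\phi(x)/(x+\sqrt{x^2+4})$ to write $f(x)-1$ as (half of) the product of $[-(a-1)x-b](x+\sqrt{x^2+4})$ and $\exp[\tfrac{x^2}{2}-\tfrac12(\tfrac{2ax+b}{a+1})^2]$, and both maximize the two factors separately to obtain $\tfrac{4(a-1)^2+b^2}{4(a-1)}$ and $\exp(\tfrac{b^2}{8a^2-2(a+1)^2})$; your substitution $s=x+\sqrt{x^2+4}$ and the explicit completion of the square simply supply the computations the paper labels ``easily verified'' (your maximizer $s=-b/(a-1)$ corresponds exactly to the paper's $x=\tfrac{4(a-1)^2-b^2}{2(a-1)b}$). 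You genuinely diverge in two places. First, for $b\ge 0$ the paper differentiates $f$ and evaluates it at a stationary point $\tilde x$ to get $f(\tilde x)=a\phi(a\tilde x+b)/\phi(\tilde x)<a$, whereas your chain $f(x)\le\Phi(a|x|)/\Phi(|x|)\le 1+(a-1)|x|\phi(|x|)/\Phi(|x|)<a$ is more elementary and sidesteps any discussion of where the maximizer lies; it also confirms your (correct) observation that $\rho<0.34$ is not needed for this lemma. Second, for $b<0$ the paper splits into the subcases $\tilde x<0$ (where it reuses the bound $a$) and $\tilde x\in[0,-b/(a-1))$ (where \cite{Birnbaum:1942} applies as stated, i.e.\ for $x\ge 0$), and then takes the maximum of $a$ and the product bound; you instead apply the tail inequality on the whole interval, asserting its validity for all real $x$. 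That assertion is true but is not what Birnbaum's cited result gives, so it needs a one-line justification --- e.g.\ for $x\le 0$ one has $\tfrac{2\phi(x)}{x+\sqrt{x^2+4}}=\tfrac12\phi(|x|)(|x|+\sqrt{x^2+4})\le\phi(|x|)(1+|x|)\le\Phi(|x|)=1-\Phi(x)$, the last step because $\Phi(y)-(1+y)\phi(y)$ is nondecreasing in $y\ge0$ and positive at $y=0$ --- or you can simply mimic the paper's subcase split and finish with $\max\{a,\,1+\tfrac{4(a-1)^2+b^2}{4(a-1)}\exp(\tfrac{b^2}{8a^2-2(a+1)^2})\}$, which equals the second entry since $\tfrac{4(a-1)^2+b^2}{4(a-1)}\ge a-1$. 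With that one line added, your write-up is complete.
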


\begin{proof}
We will divide the arguments for two cases.
Case (1): $b\geq 0$ (i.e., $x_0\leq 0)$. Regardless of the values of $a$ and $b$, we have $f\left(x\right)<\left(1-\Phi\left(0\right)\right)^{-1}= \left(1/2\right)^{-1}=2$ for $x\in \left(-\infty, \frac{-b}{a-1}\right)$. On the other hand,
\begin{equation*}
  f'(x)=\left[1-\Phi\left(x\right)\right]^{-2}\left\{-a\phi\left(ax+b\right)\left[1-\Phi\left(x\right)\right]+\phi\left(x\right)\left[1-\Phi\left(ax+b\right)\right]\right\},
\end{equation*}
$f'(\tilde{x})=0$ for each $\tilde{x} \in \operatorname*{argmax}_{x \in \mathbbm{R}} f\left(x\right)$, and $a\tilde{x}+b<\tilde{x}<0$. So, $f(\tilde{x})=\left[1-\Phi(a\tilde{x}+b)\right]/\left[1-\Phi(\tilde{x})\right]=a\phi(a\tilde{x}+b)/\phi(\tilde{x})<a$, 
and $f\left(x\right)<\min{\{2,a\}}=\min{\{2,\frac{1}{\sqrt{1-\rho}}\}}=\frac{1}{\sqrt{1-\rho}}=a$ when $\rho\in(0,0.34)$.

Case (2): $b<0$ (i.e., $x_0>0$). We have 2 subcases. If $\tilde{x}\in (-\infty, 0)$, then $f\left(x\right)<a$ by the same argument as above. If $\tilde{x}\in \left[0, \frac{-b}{a-1}\right)$, then \be1-\Phi\left(x\right)>\frac{2\phi\left(x\right)}{\sqrt{4+x^2}+x}\label{1942}\ee for $x\geq0$ by \cite{Birnbaum:1942}, and
\bea f\left(x\right)=\frac{1-\Phi\left(ax+b\right)}{1-\Phi\left(x\right)}&<&1-\frac{1}{\sqrt{2\pi}}\left[(a-1)x+b\right]\ \exp{\left[-\frac{1}{2}\left(\frac{2ax+b}{a+1}\right)^2\right]}\frac{\sqrt{4+x^2}+x}{2\phi\left(x\right)}\nonumber\\&=&1-\frac{1}{2}\left[(a-1)x+b\right]\left(\sqrt{4+x^2}+x\right)\ \exp{\left[\frac{1}{2}x^2-\frac{1}{2}\left(\frac{2ax+b}{a+1}\right)^2\right]}\nonumber\\&=&1+f_1(x)f_2(x), \nonumber\eea where the inequality follows from \autoref{result 1}.
Now we can easily verify that on $\left[0, \frac{-b}{a-1}\right)$, $$f_1(x)=-\frac{1}{2}\left[\left(a-1\right)x+b\right]\left(\sqrt{4+x^2}+x\right)\leq f_1\left(\frac{4(a-1)^2-b^2}{2(a-1)b}\right)=\frac{4(a-1)^2+b^2}{4(a-1)}$$ and $$f_2(x)=\exp{\left[\frac{1}{2}x^2-\frac{1}{2}\left(\frac{2ax+b}{a+1}\right)^2\right]}\leq f_2\left(\frac{2ab}{(a+1)^2-4a^2}\right)=\exp{\left(\frac{b^2}{8a^2-2(a+1)^2}\right)}.$$
So, $$f\left(x\right)< 1+\frac{4(a-1)^2+b^2}{4(a-1)} \exp{\left(\frac{b^2}{8a^2-2(a+1)^2}\right)}$$  and  \bea f\left(x\right)&\leq& \max{\left\{a,\ 1+\frac{4(a-1)^2+b^2}{4(a-1)} \exp{\left(\frac{b^2}{8a^2-2(a+1)^2}\right)}\right\}}\nonumber\\&=&\max{\left\{\frac{1}{\sqrt{1-\rho}},\ 1+\frac{4\left(1-\sqrt{1-\rho}\right)^2+\rho x_0^2}{4(\sqrt{1-\rho}-1+\rho)} \exp{\left(\frac{\rho x_0^2}{4\left(1-\sqrt{1-\rho}\right)+2\rho}\right)}\right\}}\nonumber\\&=&1+\frac{4\left(1-\sqrt{1-\rho}\right)^2+\rho x_0^2}{4(\sqrt{1-\rho}-1+\rho)} \exp{\left(\frac{\rho x_0^2}{4\left(1-\sqrt{1-\rho}\right)+2\rho}\right)}.\nonumber\eea
\end{proof}

\subsection{Bounding the conditional expectation involving the number of rejections}\label{section FDR}

In this subsection, we present two results that are related to conditional expectations involving the number of rejections. Write the number $R$ of rejections of the GBH1 procedure as $R\left(p_{i}, \mathbf{p}_{-i}\right)$ for each $i \in \mathbb{N}_m$.

\begin{lemma}\label{lemma 2}
For $c>0$, $j \in \mathbb{N}_g$ , $k \in G_{j0}$ and with $M(\rho,x_0)$ defined by \autoref{result 2}, \be E\left(\left.\frac{\mathbf{1}{\left\{p_{j_k} \leq c R \left(p_{j_k}, \mathbf{p}_{-{j_k}}\right)\right\}}}{R\left(p_{j_k}, \mathbf{p}_{-{j_k}}\right)} \ \right| \  X_0=x_0,\  \mathbf{p}_{-{j_k}}\right)\leq cM(\rho,x_0).\nonumber\ee
\end{lemma}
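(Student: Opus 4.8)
The plan is to run the classical leave-one-out argument for the FDR of step-up procedures (Lemma~3.2 of \cite{Blanchard:2008}), but with the super-uniformity of the null p-values replaced by the conditional bound of \autoref{result 2}. First I would condition on $X_0=x_0$ and on $\mathbf{p}_{-j_k}$. Since $k\in G_{j0}$ is a true null, $\mu_{j_k}=0$, so $p_{j_k}=1-\Phi\!\left(\sqrt{1-\rho}\,X_{j_k}+\sqrt{\rho}\,x_0\right)$ with $X_{j_k}$ independent of $\left(X_0,\mathbf{p}_{-j_k}\right)$; hence, conditionally on $\left\{X_0=x_0,\ \mathbf{p}_{-j_k}\right\}$, the variable $p_{j_k}$ has CDF $t\mapsto\Pr\!\left(\left.p_{j_k}\le t\ \right|X_0=x_0\right)=t\,g(t)$, which by \autoref{result 2} is at most $t\,M(\rho,x_0)$ for all $t\in[0,1]$. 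With the remaining p-values frozen, write $r(u):=R\!\left(u,\mathbf{p}_{-j_k}\right)$ for the number of rejections of GBH$_1$; then $r$ is an integer-valued step function of $u\in[0,1]$, and it is non-increasing in $u$. This monotonicity is the one structural fact the argument needs, and the only delicate point (see below).

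With monotonicity in hand, the rest is a deterministic pointwise bound followed by a trivial integration. Consider the ``self-consistency region'' $D:=\left\{u\in[0,1]:u\le c\,r(u)\right\}$. If $u\in D$ and $u'\le u$, then $u'\le u\le c\,r(u)\le c\,r(u')$, so $u'\in D$; thus $D$ is a down-interval, $D=[0,u_\ast]$ up to its Lebesgue-null right endpoint $u_\ast:=\sup D$. Put $\ell_0:=r\!\left(u_\ast^-\right)$, the value of the step function $r$ just to the left of $u_\ast$, a deterministic function of $\mathbf{p}_{-j_k}$. For $u<u_\ast$ we have $r(u)\ge\ell_0$ by monotonicity, and, since $r(u)=\ell_0$ for $u$ in a left-neighborhood of $u_\ast$, letting $u\uparrow u_\ast$ in the inequality $u\le c\,r(u)=c\ell_0$ gives $u_\ast\le c\ell_0$. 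Hence, on $\left\{X_0=x_0,\ \mathbf{p}_{-j_k}\right\}$,
\[
\frac{\mathbf{1}\!\left\{p_{j_k}\le c\,r(p_{j_k})\right\}}{r(p_{j_k})}=\frac{\mathbf{1}\!\left\{p_{j_k}\in D\right\}}{r(p_{j_k})}\le\frac{\mathbf{1}\!\left\{p_{j_k}\le u_\ast\right\}}{\ell_0}\le\frac{\mathbf{1}\!\left\{p_{j_k}\le c\ell_0\right\}}{\ell_0}
\]
whenever $\ell_0\ge1$; the displayed inequality may fail only at $u=u_\ast$, which has conditional probability $0$, so this suffices. The degenerate case $\ell_0=0$ forces $u_\ast=0$, so $\left\{p_{j_k}\le c\,r(p_{j_k})\right\}$ has conditional probability $0$ and the left-hand side of \autoref{lemma 2} vanishes.

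Finally, taking conditional expectations of the displayed bound and invoking the CDF bound from \autoref{result 2},
\[
E\!\left(\left.\frac{\mathbf{1}\!\left\{p_{j_k}\le c\,R\!\left(p_{j_k},\mathbf{p}_{-j_k}\right)\right\}}{R\!\left(p_{j_k},\mathbf{p}_{-j_k}\right)}\ \right|\ X_0=x_0,\ \mathbf{p}_{-j_k}\right)\le\frac{\Pr\!\left(\left.p_{j_k}\le c\ell_0\ \right|X_0=x_0\right)}{\ell_0}\le\frac{\left(c\ell_0\wedge1\right)M(\rho,x_0)}{\ell_0}\le c\,M(\rho,x_0),
\]
where the last inequality uses $M(\rho,x_0)\ge1$, which is immediate from its formula in \autoref{result 2}. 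This is exactly the assertion of \autoref{lemma 2}.

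I expect the main obstacle to be the monotonicity asserted in the first paragraph, namely that the rejection count of GBH$_1$, viewed as a function of the single coordinate $p_{j_k}$ with the others held fixed, is non-increasing. For the plain BH procedure this is standard, but GBH$_1$'s weights are data-adaptive: when $p_{j_k}$ moves across $\lambda$ the weights $w_{j'}$ of the other groups $j'\ne j$ change as well, so lowering $p_{j_k}$ simultaneously lowers the weighted p-values within group $j$ and can raise those in the other groups. One therefore has to argue that these competing effects still leave the overall rejection count non-increasing, using the monotonicity of $w_j$ in $p_{j_k}$ and the inequality $w_j\ge w_j^{(-k)}$ already recorded in \autoref{section main proof}; the boundary cases in which some $R_j(\lambda)=0$ or $r\equiv0$ should be handled separately (they are trivial). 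Everything after that is the routine leave-one-out bookkeeping carried out above.
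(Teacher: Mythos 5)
Your argument is essentially the paper's own proof: the authors likewise fix $\mathbf{p}_{-j_k}$, use the monotonicity of $R(p_{j_k})$ to reduce the self-consistency event to $\{p_{j_k}\le X^*\}$ with $X^*\le c\,Y_*$ (your $u_\ast\le c\,\ell_0$), and then apply the conditional-CDF bound $\Pr(p_{j_k}\le t\mid X_0=x_0)\le t\,M(\rho,x_0)$ from \autoref{result 2}, so your bookkeeping matches theirs up to notation (and is a bit more careful about the degenerate case $\ell_0=0$). The monotonicity of the rejection count in $p_{j_k}$ that you rightly flag as the delicate point --- because lowering $p_{j_k}$ raises the weights of the other groups --- is asserted in the paper with only the one-line justification that $w_j$ is non-decreasing in $p_{j_k}$, so you are not missing any ingredient the authors actually supply.
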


\begin{proof}
We will write the number of rejections conditional on $\mathbf{p}_{-{j_k}}$, i.e., $\left. R\left(p_{j_k}, \mathbf{p}_{-j_k}\right) \ \right| \  \mathbf{p}_{-j_k}$, as $R\left(p_{j_k}\right)$ for simplicity. Since $w_j$ non-decreases with $p_{j_k}$, $R\left(p_{j_k}\right)$ is non-increasing in $p_{j_k}$.

Let $X(x_0)=\left\{p_{j_k}:  \left.p_{j_k} \leq c R (p_{j_k})\ \right | X_0=x_0\right\}$ and $Y(x_0)=\left\{R\left(p_{j_k}\right) : p_{j_k} \in X(x_0) \right\}$. 
Then 
$\sup{X(x_0)}$ and $\inf{Y(x_0)}$ exist, which are denoted by $X^*(x_0)$ and $Y_*(x_0)$ 
 respectively. Clearly, $X^*(x_0)\leq cY_*(x_0)$.
 So, \bea &&E\left(\left.\frac{\mathbf{1}{\left\{p_{j_k} \leq c R (p_{j_k}, \mathbf{p}_{-{j_k}})\right\}}}{R(p_{j_k}, \mathbf{p}_{-{j_k}}) } \ \right| \ X_0=x_0,\  \mathbf{p}_{-{j_k}}\right) \leq \frac{\Pr\left(p_{j_k} \in X(x_0) \right)}{Y_*(x_0)}\leq \frac{\Pr\left(p_{j_k} \leq X^*(x_0) \right)}{Y_*(x_0)}\nonumber\\&\leq& \frac{c\Pr\left(p_{j_k} \leq X^*(x_0) \right)}{X^*(x_0)}\nonumber\leq c
\sup_{t\in[0, 1]}\left\{\frac{\Pr(\left.p_{j_k}\leq t\ \right | X_0=x_0)}{t}\right\}\nonumber\leq c M(\rho,x_0)\ ,\nonumber\eea where the last inequality follows from \autoref{result 2}.
\end{proof}

Since conditional on $X=x_0$, the p-value $p_{j_k}$ is not necessarily super-uniform when $\rho \ne 0$, \autoref{lemma 2} extends Lemma 3.2 of \cite{Blanchard:2008}, the latter of which in our notations has $M(\rho,x_0)=1$ when the p-values $\left\{p_i\right\}_{i=1}^m$ are independent and super-uniform. However, employing the supremum $M(\rho,x_0)$ is the key reason why the upper bound provided by \autoref{theorem 1} can be loose.

\begin{lemma} \label{lemma 1}
For any non-negative, real-valued, measurable function $\tilde{h}$,
\be \sum\limits_{k\in G_{j0}} E\left[\frac{\tilde{h}\left(R_j^{(-k)}\left(\lambda,x_0\right)\right)}{n_j-R_j^{(-k)}\left(\lambda,x_0\right)}\right]\leq \frac{1}{P\left(\lambda,x_0\right)}E\left[\tilde{h}\left(R_j\left(\lambda,x_0\right)\right)\right]\nonumber, \ee where $P\left(\lambda,x_0\right)=\Pr\left(\left.p_{j_k}> \lambda\ \right | X_0=x_0\right) $ is such that
$P\left(\lambda,x_0\right)\geq \Phi\left(a\Phi^{-1}\left(1- \lambda\right)\right)$ for $b\geq 0$ and
$P\left(\lambda,x_0\right)>\frac{\phi(-b)}{1-b}$ for $b< 0$ when $\lambda\in \left(0,1/2\right]$, $j \in \mathbb{N}_g$ and $k \in G_{j0}$.
\end{lemma}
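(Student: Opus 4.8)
The plan is to establish the inequality by the standard leave‑one‑out argument, carried out conditionally on $X_0=x_0$, and then to bound $P(\lambda,x_0)$ from below using the closed form of $\Pr(p_{j_k}\le\lambda\mid X_0=x_0)$ obtained in \autoref{section probability} together with the Birnbaum bound (\ref{1942}).

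First I would record the conditional structure. Given $X_0=x_0$, each $p_i=1-\Phi(\sqrt{1-\rho}X_i+\sqrt{\rho}x_0)$ is a function of $X_i$ alone, so $p_1,\dots,p_m$ are mutually independent; in particular, for $k\in G_{j0}$ the null p-value $p_{j_k}$ is independent of $\mathbf{p}_{-j_k}$, hence of $R_j^{(-k)}(\lambda,x_0)$, while $n_j-R_j^{(-k)}(\lambda,x_0)\ge 1$ automatically. The pivotal identity is that on the event $\{p_{j_k}>\lambda\}$ one has $R_j^{(-k)}(\lambda,x_0)=R_j(\lambda,x_0)$, so that $n_j-R_j^{(-k)}(\lambda,x_0)=n_j-R_j(\lambda,x_0)\ge 1$ there (the index $j_k\in G_j$ then witnesses a p-value above $\lambda$), and $\tilde{h}(R_j^{(-k)}(\lambda,x_0))/(n_j-R_j^{(-k)}(\lambda,x_0))=\tilde{h}(R_j(\lambda,x_0))/(n_j-R_j(\lambda,x_0))$ there. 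Using independence, for each $k\in G_{j0}$ the quantity $\tilde{h}(R_j^{(-k)})/(n_j-R_j^{(-k)})$ is a function of $\mathbf{p}_{-j_k}$, so $E[\tilde{h}(R_j^{(-k)})/(n_j-R_j^{(-k)})]=P(\lambda,x_0)^{-1}E[\tilde{h}(R_j^{(-k)})/(n_j-R_j^{(-k)})\,\mathbf{1}\{p_{j_k}>\lambda\}]=P(\lambda,x_0)^{-1}E[\tilde{h}(R_j)/(n_j-R_j)\,\mathbf{1}\{p_{j_k}>\lambda\}]$, where the indicator restricts the expectation to $\{p_{j_k}>\lambda\}$, on which $n_j-R_j\ge 1$. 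Summing over $k\in G_{j0}$ and noting $\sum_{k\in G_{j0}}\mathbf{1}\{p_{j_k}>\lambda\}=n_{j0}-V_j(\lambda,x_0)\le n_j-R_j(\lambda,x_0)$ — the inequality because $G_{j0}\subseteq G_j$ — together with $\tilde{h}\ge 0$, one obtains $\sum_{k\in G_{j0}}\tilde{h}(R_j)/(n_j-R_j)\,\mathbf{1}\{p_{j_k}>\lambda\}\le\tilde{h}(R_j(\lambda,x_0))$ pointwise, and the asserted inequality follows by taking expectations.

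For the bound on $P(\lambda,x_0)$, I would start from \autoref{section probability}, which gives $P(\lambda,x_0)=\Pr(p_{j_k}>\lambda\mid X_0=x_0)=\Phi(a\Phi^{-1}(1-\lambda)+b)$ with $a=1/\sqrt{1-\rho}>0$ and $b=-\sqrt{\rho/(1-\rho)}\,x_0$. Since $\lambda\le 1/2$, $\Phi^{-1}(1-\lambda)\ge 0$, hence $a\Phi^{-1}(1-\lambda)\ge 0$; when $b\ge 0$ the monotonicity of $\Phi$ then yields $P(\lambda,x_0)\ge\Phi(a\Phi^{-1}(1-\lambda))$, and when $b<0$ it yields $P(\lambda,x_0)\ge\Phi(b)=1-\Phi(-b)$. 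In the latter case I apply (\ref{1942}) with $x=-b>0$ to get $1-\Phi(-b)>2\phi(-b)/(\sqrt{4+b^2}-b)$, and conclude with the elementary fact that $2-b\ge\sqrt{4+b^2}$ whenever $b\le 0$ (both sides positive, squaring reduces it to $-4b\ge 0$), which gives $2/(\sqrt{4+b^2}-b)\ge 1/(1-b)$ and hence $P(\lambda,x_0)>\phi(-b)/(1-b)$.

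I do not expect a substantial obstacle; the two places needing care are (i) ensuring no division by zero, which is why one works on $\{p_{j_k}>\lambda\}$ — on which $n_j-R_j^{(-k)}(\lambda,x_0)=n_j-R_j(\lambda,x_0)\ge 1$ — before invoking the leave-one-out identity, and (ii) choosing a Gaussian tail bound sharp enough to produce the clean lower bound $\phi(-b)/(1-b)$, for which Birnbaum's inequality (\ref{1942}) is precisely what works.
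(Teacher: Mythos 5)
Your proposal is correct and follows essentially the same route as the paper: conditional independence of the p-values given $X_0=x_0$ is used to insert the indicator $\mathbf{1}\{p_{j_k}>\lambda\}$ at the cost of a factor $P(\lambda,x_0)^{-1}$, the identity $R_j^{(-k)}(\lambda,x_0)=R_j(\lambda,x_0)$ on that event converts the leave-one-out quantities, and the counting bound $n_{j0}-V_j(\lambda,x_0)\leq n_j-R_j(\lambda,x_0)$ finishes the sum over $k$ (the paper merely organizes this by summing over the values $r$ of $R_j^{(-k)}$). Your lower bounds on $P(\lambda,x_0)$, via monotonicity of $\Phi$ for $b\geq 0$ and Birnbaum's inequality (\ref{1942}) plus $2-b\geq\sqrt{4+b^2}$ for $b<0$, are exactly the paper's.
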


\begin{proof} Define $V_j \left(\lambda,x_0\right)=\sum\limits_{i\in G_{j0}}\mathbf{1}{ \left\{\left. p_i\leq \lambda\  \right | X_0=x_0\right\}}$. By simple algebra,
\bea &&\sum\limits_{k\in G_{j0}} E\left[\frac{\tilde{h}\left(R_j^{(-k)}\left(\lambda,x_0\right)\right)}{n_j-R_j^{(-k)}\left(\lambda,x_0\right)}\right]\nonumber\\&=&\sum\limits_{r=0}^{n_j-1}\sum\limits_{k\in G_{j0}} E\left[\frac{\mathbf{1}{\left\{R_j^{(-k)}\left(\lambda,x_0\right)=r\right\}}\tilde{h}\left(R_j^{(-k)}\left(\lambda,x_0\right)\right)}{n_j-R_j^{(-k)}\left(\lambda,x_0\right)}\right]\nonumber
\\&=&\frac{1}{P\left(\lambda,x_0\right)}\sum\limits_{r=0}^{n_j-1}\sum\limits_{k\in G_{j0}} E\left[\frac{\mathbf{1}{\left\{R_j^{(-k)}\left(\lambda,x_0\right)=r\right\}}\mathbf{1}{\left\{\left.p_{j_k}> \lambda\ \right | X_0=x_0\right\}}\tilde{h}\left(r\right)}{n_j-r}\right]\label{pol1.1}
\\&=&\frac{1}{P\left(\lambda,x_0\right)} E\left[\frac{\left(n_{j0}-V_j\left(\lambda,x_0\right) \right)\tilde{h}\left(R_j\left(\lambda,x_0\right)\right)}{n_j-R_j\left(\lambda,x_0\right)}\right]\nonumber\leq \frac{1}{P\left(\lambda,x_0\right)}E\left[\tilde{h}\left(R_j\left(\lambda,x_0\right)\right)\right], \nonumber 
\eea
where (\ref{pol1.1}) is due to the independence among the p-values conditioned on $X=x_0$ and the inequality is due to the fact that $n_{j0}-V_j \left(\lambda,x_0\right) \leq n_j-R_j\left(\lambda,x_0\right)$.

Now consider $\tilde{P}\left(\lambda,x_0\right)=\Pr(\left.p_i> \lambda\ \right | X_0=x_0)$
for $\lambda\in \left(0,1/2\right]$ and $i\in I_0$. Then
\be \tilde{P}\left(\lambda,x_0\right)=\Pr\left(1-\Phi\left(\sqrt{1-\rho} X_i+\sqrt{\rho}x_0\right)> \lambda\right)=\Phi\left(a\Phi^{-1}\left(1- \lambda\right)+b\right).\nonumber\ee 
Clearly, \be \tilde{P}\left(\lambda,x_0\right)\geq \Phi\left(a\Phi^{-1}\left(1- \lambda\right)\right) \quad \text{when} \quad b\geq 0,\label{result 3a}\ee whereas when $b< 0$, \bea \tilde{P}\left(\lambda,x_0\right)\geq \Phi(b)=1-\Phi(-b)>\frac{2\phi(-b)}{\sqrt{4+b^2}-b}>\frac{\phi(-b)}{1-b},\label{result 3b}\eea
where we have applied (\ref{1942}) to obtain the second inequality. When $k \in G_{j0}$, $j_k$ has to be equal to some $i^{\prime} \in I_0$, which justifies the claim on $P\left(\lambda,x_0\right)$.
\end{proof}

\autoref{lemma 1} extends Lemma 1 of \cite{Nandi:2018}, in that, when $\rho=0$, i.e., when the p-values are independent, $P\left(\lambda,x_0\right)=1-\lambda$ holds for the former and hence reduces to the latter.

\bibliographystyle{dcu}
\bibliography{fdr}

\end{document}